\setlist{topsep=4pt, parsep=3pt}
\xapptocmd\normalsize{%
 \abovedisplayskip=10pt plus 1pt minus 3pt
 \abovedisplayshortskip=1pt plus 3pt
 \belowdisplayskip=10pt plus 2pt minus 3pt
 \belowdisplayshortskip=8pt plus 3pt minus 2pt
}{}{}
\newtheorem{theorem}{Theorem}
\newtheorem{lemma}[theorem]{Lemma}
\newtheorem{corollary}[theorem]{Corollary}
\newtheorem{proposition}[theorem]{Proposition}
\newtheorem*{maintheorem}{Main Theorem}
\theoremstyle{definition}
\theoremstyle{remark}
\newcommand{\set}[1]{\hspace{-0.8pt} \left \{ \hspace{0.03em} {#1} \hspace{0.03em} \right \}}
\begin{document}
	
	\title{Sections of polar actions}
	
	
	\title{Sections of polar actions}

    \author{Juan Manuel Lorenzo-Naveiro, Ivan Solonenko}
    
    \address{Department of Mathematics, King's College London, United Kingdom}
    \email{ivan.solonenko@kcl.ac.uk}
    
    \address{Department of Mathematics, University of Santiago de Compostela, Spain}
    \email{jm.lorenzo@usc.es}
	
    \begin{abstract}
    In this short note we provide an elementary proof of the folklore result in the theory of isometric Lie group actions on Riemannian manifolds asserting that sections of polar actions are totally geodesic.
    \end{abstract}
	
	\maketitle
	
	Let $G$ be a connected Lie group acting properly and isometrically on a complete Riemannian manifold $M$. A complete immersed submanifold $\Sigma\subseteq M$ is called a \textit{section} of the action $G \curvearrowright M$ if it meets all the orbits perpendicularly. More formally:
	\begin{enumerate}
		\setlength{\itemsep}{0pt}
		\item[$\bullet$] $\Sigma\cap (G\cdot p) \neq \varnothing$ for all $p\in M$.
		\item[$\bullet$] $T_{p}\Sigma$ and $T_{p}(G\cdot p)$ are orthogonal subspaces of $T_p M$ for all $p\in \Sigma$.
	\end{enumerate}
	
	An action is called \textit{polar} if it admits a section. Quite often, the hypotheses on $\Sigma$ are strengthened by assuming that it is embedded and/or closed. For the sake of generality, we will not make those assumptions. The aim of this note is to provide an elementary proof of the following well-known result:
	
	\begin{maintheorem}
		Let $\Sigma\subseteq M$ be a section of a polar action $G\curvearrowright M$. Then $\Sigma$ is a totally geodesic submanifold of $M$.
	\end{maintheorem}
	
	Although this is a very basic fact in the theory of isometric actions, the literature on the topic does not appear to be teeming with its various proofs. Some of the classical papers that are usually cited as containing a proof of this result are \cite{PT} by Palais and Terng (as well as their subsequent book \cite{PTbook}) and \cite{Sz} by Szenthe. The authors of these works only prove that a section $\Sigma$ is totally geodesic at those points which are regular in $M$ (i.e. whose $G$-orbits are principal). One would then expect $\Sigma$ to be globally totally geodesic because the regular points form an open and dense subset $M_\mathrm{reg}$ of $M$ and the second fundamental form of $\Sigma$ is a smooth tensor field. The problem here is that $\Sigma \cap M_\mathrm{reg}$ may not, in theory, be dense in $\Sigma$. While some authors (c.f. \cite[Exercise 4.9 (iii)]{AB}, \cite[Proposition 1.3 (a)]{KG}, \cite[Theorem 30.9 (4)]{Mi}) comment briefly on why this statement should hold, it appears that no detailed proofs are currently available. In this note, we follow a route proposed in \cite{Be} and give a complete proof that sections are always totally geodesic. In particular, we show that $\Sigma \cap M_\mathrm{reg}$ is indeed dense in $\Sigma$.
	
	We will use a number of standard facts about proper isometric actions:
	\begin{enumerate}
	    \item \cite[Definition 29.2]{Mi} For any $p \in M$, there exists $\varepsilon > 0$ such that the restriction of the normal exponential map $\exp^\perp \colon \nu(G \cdot p) \to M$ to $\nu^\varepsilon(G \cdot p) = \set{\xi \in \nu(G \cdot p) \mid ||\xi|| < \varepsilon}$ is a diffeomorphism onto an open neighborhood of $G \cdot p$, which we denote $U^\varepsilon(G \cdot p)$ (here $\nu(G \cdot p)$ stands for the normal bundle of the orbit $G \cdot p$). This is a version of the tubular neighborhood theorem adapted to an extrinsically homogeneous submanifold. For any such $\varepsilon$, the submanifold $S_p^\varepsilon = \exp^\perp(\nu_p^\varepsilon(G \cdot p))$ is a slice of the action at $p$. Note that $S_p^\varepsilon \cap S_q^\varepsilon = \varnothing$ for any point $q \in G \cdot p$ other than $p$.
	    \item \cite[Corollary 2.2.2]{Be} An orbit $G \cdot p$ is principal if and only if the slice representation $G_p \to \mathrm{O}(\nu_{p}(G\cdot p)), g \mapsto g_{*p}$, is trivial.
	    \item \cite[Theorem 29.14]{Mi} Regular points of $G \curvearrowright M$ form an open and dense subset of $M$.
	    \item \cite[Section 2.3.1]{Be} If the action of $G$ is polar and $\Sigma$ is a section, then $\dim \Sigma$ coincides with the cohomogeneity of the action, which we denote by $\text{cohom}(G\curvearrowright M)$. In particular, if $G \cdot p$ is a principal or exceptional orbit, we have $T_p \Sigma = \nu_p(G \cdot p)$.
	\end{enumerate}

    Whenever we write $S_p^\varepsilon$ or $U^\varepsilon(G \cdot p)$, we assume that $\varepsilon$ is small enough as in (1) above. We start by recalling the proof of the fact that sections are totally geodesic at their regular points.
	\begin{proposition}[{\cite[Theorem 2.3.2]{Be}}]\label{II}
		If $\Sigma$ is a section of a polar action $G\curvearrowright M$, then its second fundamental form vanishes at all the regular points of $M$ lying in $\Sigma$.
	\end{proposition}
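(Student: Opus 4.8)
My plan is to exploit the identity $\nu_p\Sigma = T_p(G\cdot p)$, which holds at a regular point $p$: since $G\cdot p$ is then principal, it has codimension $\dim\Sigma = \mathrm{cohom}(G\curvearrowright M)$ by fact (4), so the normal space of $\Sigma$ at $p$ is exactly the tangent space of the orbit. The latter is spanned by the values at $p$ of the fundamental vector fields $X^{*}$ of the action — the Killing fields induced by one-parameter subgroups of $G$. Consequently, since the second fundamental form $\mathrm{II}$ of $\Sigma$ takes values in $\nu\Sigma$, it is enough to prove that $\langle \mathrm{II}_p(\xi,\eta), X^{*}_p\rangle = 0$ for all $\xi,\eta\in T_p\Sigma$ and every such $X^{*}$.

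To establish this, I would extend $\xi$ and $\eta$ to local vector fields tangent to $\Sigma$ near $p$. The defining property of a section says exactly that the function $\langle \eta, X^{*}\rangle$ vanishes identically along $\Sigma$, because at each $q\in\Sigma$ the vector $X^{*}_q$ lies in $T_q(G\cdot q)$, which is orthogonal to $T_q\Sigma$. Differentiating this identity in the direction of $\xi$ at $p$, using metric compatibility together with the Gauss formula $\nabla^{M}_{\xi}\eta = \nabla^{\Sigma}_{\xi}\eta + \mathrm{II}(\xi,\eta)$ and the fact that $X^{*}_p\perp T_p\Sigma$, yields
\[
\langle \mathrm{II}_p(\xi,\eta), X^{*}_p\rangle = -\langle \eta_p,\, (\nabla^{M}_{\xi} X^{*})_p\rangle .
\]

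The final step pits two symmetries against each other: $\mathrm{II}$ is symmetric in its two arguments, whereas $\nabla X^{*}$ is skew-symmetric because $X^{*}$ is a Killing field. Interchanging $\xi$ and $\eta$ in the displayed identity and combining the two gives
\[
\langle \mathrm{II}_p(\xi,\eta), X^{*}_p\rangle = -\langle \xi_p,\, (\nabla^{M}_{\eta} X^{*})_p\rangle = \langle \eta_p,\, (\nabla^{M}_{\xi} X^{*})_p\rangle = -\langle \mathrm{II}_p(\xi,\eta), X^{*}_p\rangle,
\]
so this quantity vanishes. As $X^{*}$ was arbitrary and the vectors $X^{*}_p$ span $\nu_p\Sigma$, we conclude $\mathrm{II}_p = 0$.

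I do not expect a real obstacle here. The only point requiring care is that $\Sigma$ is merely assumed immersed, so ``vanishes along $\Sigma$'' and the differentiation above must be read through the pullback of $TM$ and its Levi-Civita connection along the immersion — which changes nothing. The place where the hypothesis genuinely bites is the identification $\nu_p\Sigma = T_p(G\cdot p)$: at a non-regular $p$ one only has $T_p(G\cdot p)\subsetneq\nu_p\Sigma$, so the fundamental vector fields no longer span the normal space and the argument says nothing — which is precisely the gap that the remainder of the note is devoted to closing.
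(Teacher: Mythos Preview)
Your proof is correct and follows essentially the same route as the paper's: both use that at a regular point $\nu_p\Sigma = T_p(G\cdot p)$, that the fundamental vector fields $X^*$ are Killing and normal to $\Sigma$ along $\Sigma$, and then the identity $\langle \mathrm{II}(\xi,\eta), X^*\rangle = -\langle \eta, \nabla_\xi X^*\rangle$ combined with the skew-symmetry of $\nabla X^*$. The only cosmetic difference is that the paper sets $\xi=\eta=v$, reads off $\langle v,\nabla_v X^*\rangle=0$ directly from skew-symmetry, and then polarizes, whereas you play the symmetry of $\mathrm{II}$ against the skew-symmetry of $\nabla X^*$ for general $\xi,\eta$.
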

	\begin{proof}
		Let $p\in \Sigma$ be any point such that $G\cdot p$ has maximal dimension. For any $\xi \in \nu_p \Sigma = T_{p}(G\cdot p)$, we can find an element $X$ in the Lie algebra $\mathfrak{g}$ of $G$ such that
		$$
		\xi=X^*_p, \; \text{where} \hspace{0.4em} X^*_q = \dfrac{d}{dt}\bigg|_{t=0} \exp(tX)\cdot q.
		$$
		Here, $\exp$ denotes the Lie exponential map of $G$. Now, $X^{*}$ is a Killing vector field, so its covariant derivative is skew-symmetric. Moreover, $X^*$ is everywhere orthogonal to $\Sigma$ by polarity. Let $\mathbb{II}$ be the second fundamental form of $\Sigma$. For any $v\in T_{p}\Sigma$, we have $\langle \mathbb{II}(v,v),\xi \rangle=-\langle v,\nabla_{v}X^{*} \rangle=0$. Since $\mathbb{II}(v,v)$ is tangent to $G \cdot p$, we obtain $\mathbb{II}(v,v)=0$. Polarizing, we get $\mathbb{II}=0$. Note that we have actually proved that $\mathbb{II}$ vanishes not only at the regular points of $\Sigma$, but also at those whose orbits are exceptional. \qedhere		
	\end{proof}
	
	The key to finishing the proof is to see that, in fact, regular points are dense in any section. Since the second fundamental form is smooth, the preceding proposition implies that it will then vanish globally. We will need a few lemmas.
	
	\begin{lemma}\label{isotropies}
		Let $p\in M$ be arbitrary. Then $G_{q}\subseteq G_{p}$ for all $q\in S^{\varepsilon}_{p}$.
	\end{lemma}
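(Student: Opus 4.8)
The plan is to deduce the lemma directly from the tubular neighborhood theorem recalled in fact (1), specifically from the disjointness of the slices $S_{p'}^{\varepsilon}$ as $p'$ ranges over the orbit $G \cdot p$. First I would fix $q \in S_p^{\varepsilon}$ and write $q = \exp^{\perp}(\xi)$ for the unique $\xi \in \nu_p^{\varepsilon}(G \cdot p)$. Then, given an arbitrary $g \in G_q$, the goal is to show $g \cdot p = p$.

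The first step is to record how $g$ interacts with the normal geometry of $G \cdot p$. Since $g$ is an isometry of $M$ and the action preserves orbits, $g$ maps $G \cdot p$ onto itself; hence its differential at $p$ restricts to a linear isometry $\nu_p(G \cdot p) \to \nu_{g \cdot p}(G \cdot p)$, and, because isometries carry geodesics to geodesics, $g \cdot \exp^{\perp}(\eta) = \exp^{\perp}(g_{*}\eta)$ for every sufficiently short normal vector $\eta$. Applying this with $\eta = \xi$ and noting $\|g_{*p}\xi\| = \|\xi\| < \varepsilon$, I would obtain
$$
q \;=\; g \cdot q \;=\; g \cdot \exp^{\perp}(\xi) \;=\; \exp^{\perp}(g_{*p}\xi) \;\in\; S_{g \cdot p}^{\varepsilon}.
$$

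The second step is to invoke disjointness: $q$ now belongs to $S_p^{\varepsilon} \cap S_{g \cdot p}^{\varepsilon}$, and both $p$ and $g \cdot p$ lie on the orbit $G \cdot p$, so by the remark at the end of fact (1) these two slices are either disjoint or equal. Since their intersection contains $q$, they must coincide, forcing $g \cdot p = p$, i.e.\ $g \in G_p$, which finishes the proof.

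I do not expect any serious obstacle here: morally, the lemma just says that the tube $U^{\varepsilon}(G \cdot p)$ is $G$-invariant and foliated by the slices, so an isometry fixing a point of a slice must fix its center. The only point that deserves a line of justification is the equivariance $g \cdot \exp^{\perp}(\eta) = \exp^{\perp}(g_{*}\eta)$ together with the fact that $g_{*p}$ preserves the orthogonal splitting $T_p M = T_p(G \cdot p) \oplus \nu_p(G \cdot p)$; both are immediate from $g$ being an isometry with $g(G \cdot p) = G \cdot p$.
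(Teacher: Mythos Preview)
Your proof is correct and follows essentially the same approach as the paper: both arguments show that $q = g\cdot q \in S_p^\varepsilon \cap S_{g\cdot p}^\varepsilon$ and then invoke the disjointness of slices from fact~(1) to conclude $g\cdot p = p$. The paper simply writes this in one line without spelling out the equivariance $g\cdot S_p^\varepsilon = S_{g\cdot p}^\varepsilon$, which you justify explicitly via the equivariance of $\exp^\perp$.
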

	\begin{proof}[Proof]
		For any $g\in G_{q}$, we have $q=g\cdot q\in S^{\varepsilon}_{p}\cap S^{\varepsilon}_{g\cdot p} \ne \varnothing$, which implies $g\cdot p = p$ and therefore $g\in G_{p}$. \qedhere
	\end{proof}
	
	\begin{corollary}\label{corollary}
	Every $p \in M$ has an open neighborhood $U$ such that the orbit type of any $q \in U$ is greater than or equal to that of $p$, i.e. $G_q$ is conjugated to a subgroup of $G_p$.
	\end{corollary}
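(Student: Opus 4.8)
The plan is to take $U$ to be the tubular neighborhood $U^\varepsilon(G \cdot p)$ supplied by fact (1) and to leverage Lemma~\ref{isotropies} together with the observation that $U^\varepsilon(G\cdot p)$ is swept out by the slice $S_p^\varepsilon$ under the group action.

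First I would record that $U^\varepsilon(G\cdot p) = G\cdot S_p^\varepsilon$. This is immediate from fact (1): the normal exponential map $\exp^\perp$ is $G$-equivariant, and the normal bundle of the (extrinsically homogeneous) orbit $G\cdot p$ is swept out by the single fibre $\nu_p(G\cdot p)$ under $G$, i.e. $\nu^\varepsilon(G\cdot p) = G\cdot \nu^\varepsilon_p(G\cdot p)$; applying $\exp^\perp$ gives the claim. Consequently, for an arbitrary $q \in U^\varepsilon(G\cdot p)$ there exists $g\in G$ with $g\cdot q \in S_p^\varepsilon$.

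Next I would apply Lemma~\ref{isotropies} to the point $g\cdot q\in S_p^\varepsilon$, obtaining $G_{g\cdot q}\subseteq G_p$. Since isotropy groups along a single orbit are conjugate, $G_{g\cdot q} = g\,G_q\,g^{-1}$, and therefore $G_q = g^{-1}G_{g\cdot q}\,g \subseteq g^{-1}G_p\,g$. This is precisely the assertion that the orbit type of $q$ is greater than or equal to that of $p$, so $U = U^\varepsilon(G\cdot p)$ works.

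The only step that requires any care is the identification $U^\varepsilon(G\cdot p) = G\cdot S_p^\varepsilon$ — everything else is a one-line conjugation argument — but this is a standard property of slices and is already implicit in the formulation of fact (1), so there is no genuine obstacle here.
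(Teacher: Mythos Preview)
Your proof is correct and is precisely the argument the paper gives: take $U = U^\varepsilon(G\cdot p) = G\cdot S_p^\varepsilon$, then combine Lemma~\ref{isotropies} with the fact that points on the same orbit have conjugate isotropy groups. The paper states this in one line, while you have spelled out the conjugation step and the equality $U^\varepsilon(G\cdot p) = G\cdot S_p^\varepsilon$ explicitly, but the route is identical.
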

	
	\begin{proof}
	    Just take $U = U^\varepsilon(G \cdot p) = G \cdot S_p^\varepsilon$. Since points on the same orbit have the same orbit type, everything follows from Lemma \ref{isotropies}.
	\end{proof}
	
	\begin{lemma}[{\cite[Exercise 2.11.4]{Be}}]
	    For each $p \in M$, the cohomogeneity of the slice representation at $p$ coincides with the cohomogeneity of the action $G \curvearrowright M$.
	\end{lemma}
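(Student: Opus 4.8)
The plan is to carry out a dimension count inside a tubular neighborhood $U^\varepsilon(G \cdot p)$ of the orbit through $p$, comparing the dimensions of the $G$-orbits there with the dimensions of the $G_p$-orbits of the slice representation on $V := \nu_p(G \cdot p)$. Recall that $\text{cohom}(G \curvearrowright M) = \dim M - \max_{q \in M}\dim(G \cdot q)$, a value which is realized at every regular point of $M$, and that the cohomogeneity of the slice representation at $p$ is $\dim V - \max_{v \in V}\dim(G_p \cdot v)$; so it suffices to relate these two maxima. The bridge between them is the slice $S_p^\varepsilon = \exp^\perp(\nu_p^\varepsilon(G \cdot p))$: every $g \in G_p$ fixes $p$ and preserves $G \cdot p$, hence preserves $V$, and because isometries commute with the normal exponential map, $\exp^\perp$ restricts to a $G_p$-equivariant diffeomorphism from the ball $\nu_p^\varepsilon(G \cdot p)$ onto $S_p^\varepsilon$ that intertwines the slice representation with the $G_p$-action on $S_p^\varepsilon$.

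The first key step is a dimension identity. Given $q \in S_p^\varepsilon$, Lemma \ref{isotropies} gives $G_q \subseteq G_p$, so the isotropy group of $q$ for the $G_p$-action on $S_p^\varepsilon$ is $G_p \cap G_q = G_q$; hence $\dim(G_p \cdot q) = \dim G_p - \dim G_q = \dim(G \cdot q) - \dim(G \cdot p)$. Since $\dim S_p^\varepsilon = \dim V = \dim M - \dim(G \cdot p)$, subtracting from $\dim M$ turns this into
$$
\dim M - \dim(G \cdot q) \;=\; \dim V - \dim(G_p \cdot q) \qquad \text{for every } q \in S_p^\varepsilon .
$$
I would then take extrema on both sides of this identity. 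On the left: every $G$-orbit meeting $U^\varepsilon(G \cdot p) = G \cdot S_p^\varepsilon$ already meets $S_p^\varepsilon$, so minimizing $\dim M - \dim(G \cdot q)$ over $q \in S_p^\varepsilon$ gives the same answer as minimizing it over $q \in U^\varepsilon(G \cdot p)$; the latter set is open and nonempty, hence contains a regular point by (3), and since $\dim M - \dim(G \cdot q)$ attains its global minimum $\text{cohom}(G \curvearrowright M)$ at every regular point, this common minimum equals $\text{cohom}(G \curvearrowright M)$. On the right: under the $G_p$-equivariant diffeomorphism above, the numbers $\dim(G_p \cdot q)$, $q \in S_p^\varepsilon$, are exactly the $G_p$-orbit dimensions occurring on the ball $\nu_p^\varepsilon(G \cdot p)$; and since the slice representation is linear, $\dim(G_p \cdot v) = \dim(G_p \cdot tv)$ for all $v \in V$ and $t > 0$, so scaling shows these coincide with the $G_p$-orbit dimensions on all of $V$, whence $\max_{q \in S_p^\varepsilon}\dim(G_p \cdot q) = \max_{v \in V}\dim(G_p \cdot v)$. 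Equating the two sides therefore gives $\text{cohom}(G \curvearrowright M) = \dim V - \max_{v \in V}\dim(G_p \cdot v)$, which is the cohomogeneity of the slice representation at $p$.

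The one genuinely delicate point is the claim that the minimum of $\dim M - \dim(G \cdot q)$ over the tube $U^\varepsilon(G \cdot p)$ equals the \emph{global} cohomogeneity rather than some larger number; this is precisely where the density of regular points, (3), is used, since a priori the orbit through $p$ itself need not be regular — indeed the lemma is meant to be applied at arbitrary, possibly singular, $p$. Everything else is routine bookkeeping with the orbit–stabilizer dimension formula together with the equivariance of the normal exponential map and the scaling-invariance of orbit dimensions for linear actions.
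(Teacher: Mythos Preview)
Your argument is correct and follows essentially the same route as the paper: both identify the $G_p$-isotropy of a point $q=\exp(\xi)$ in the slice with the full isotropy $G_q$ via Lemma~\ref{isotropies}, turn this into the codimension identity $\mathrm{codim}_M(G\cdot q)=\mathrm{codim}_{\nu_p(G\cdot p)}(G_p\cdot\xi)$, and then invoke the density of regular points in the tube to match the local cohomogeneity with the global one. Your explicit mention of the scaling argument (to pass from the $\varepsilon$-ball to all of $V$) is a detail the paper leaves implicit, but otherwise the two proofs are the same.
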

	
	\begin{proof}
	Let $\varepsilon$ be small enough as before and let $\xi \in \nu_p^\varepsilon(G \cdot p)$. We begin by proving that $(G_p)_\xi = G_{\exp(\xi)}$. The diffeomorphism $\exp^\perp \colon \nu_p^\varepsilon(G \cdot p) \to U_p^\varepsilon(G \cdot p) = S_p^\varepsilon$ is $G_p$-equivariant, which implies that $(G_p)_\xi = (G_p)_{\exp(\xi)} \subseteq G_{\exp(\xi)}$. But now it follows from Lemma \ref{isotropies} that $G_{\exp(\xi)} \subseteq G_p$, so $(G_p)_{\exp(\xi)}$ simply coincides with $G_{\exp(\xi)}$. We  thus have:
	\begin{align*}
	\mathrm{codim}_M(G \cdot \exp(\xi)) &= \dim M - (\dim G - \dim G_{\exp(\xi)}) \\
	&= \dim M - (\dim G - \dim G_p + \dim G_p - \dim (G_p)_\xi) \\
	&= (\dim M - \dim (G \cdot p)) - \dim (G_p \cdot \xi) \\
	&= \mathrm{codim}_{\nu_p(G \cdot p)}(G_p \cdot \xi).
	\end{align*}
	Consequently, $\mathrm{cohom}(G_p \curvearrowright \nu_p(G \cdot p)) = \mathrm{cohom}(G \curvearrowright U^\varepsilon(G \cdot p))$. But since the regular points of $G \curvearrowright M$ are dense in $M$, there is one in $U^\varepsilon(G \cdot p)$, hence $\mathrm{cohom}(G \curvearrowright U^\varepsilon(G \cdot p)) = \mathrm{cohom}(G \curvearrowright M)$.
	\end{proof}
	
	\begin{lemma}[{\cite[Exercise 2.11.5]{Be}}]\label{invariants}
		Let $p\in M$ be such that $G\cdot p$ is a nonprincipal orbit, and let $V \subseteq \nu_{p}(G\cdot p)$ be the subspace of fixed points for the slice representation at $p$. Then $\dim V$ is strictly smaller than the cohomogeneity of the action $G\curvearrowright M$.
	\end{lemma}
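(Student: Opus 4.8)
The plan is to transfer the statement to the slice representation $\rho\colon G_p \to \mathrm{O}(N)$ at $p$, where $N = \nu_p(G\cdot p)$, and to isolate the ``non-fixed part'' of $N$. Since $\rho$ is orthogonal and $V = N^{G_p}$ is $G_p$-invariant, its orthogonal complement $W := V^{\perp}$ is $G_p$-invariant as well, so that $N = V \oplus W$ is a decomposition into $G_p$-subrepresentations with $W^{G_p} = \{0\}$. The first observation I would record is that $W \neq \{0\}$: otherwise $N = V$, i.e. $\rho$ is trivial, and then fact (2) above would force $G\cdot p$ to be a principal orbit, contrary to hypothesis.

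Next I would compute $\mathrm{cohom}(G_p \curvearrowright N)$ from this splitting. Because $G_p$ acts trivially on $V$, the orbit of any point $(v,w) \in V \oplus W$ is simply $\{v\} \times (G_p \cdot w)$, so its codimension in $N$ equals $\dim V + \mathrm{codim}_W(G_p\cdot w)$. Minimizing over all points yields
$$\mathrm{cohom}(G_p \curvearrowright N) \;=\; \dim V + \mathrm{cohom}(G_p \curvearrowright W).$$
Now the action of $G_p$ on $W$ is linear and orthogonal with $W \neq \{0\}$, so every orbit is contained in a sphere centered at the origin and hence has codimension at least $1$ in $W$; consequently $\mathrm{cohom}(G_p \curvearrowright W) \geq 1$.

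Combining these facts with the preceding lemma, which identifies $\mathrm{cohom}(G_p \curvearrowright N)$ with $\mathrm{cohom}(G \curvearrowright M)$, I obtain
$$\dim V \;=\; \mathrm{cohom}(G_p \curvearrowright N) - \mathrm{cohom}(G_p \curvearrowright W) \;\leq\; \mathrm{cohom}(G \curvearrowright M) - 1 \;<\; \mathrm{cohom}(G \curvearrowright M),$$
which is the claim. I do not expect any genuine obstacle here: the only step that requires a moment's thought is the additivity of the cohomogeneity along $N = V \oplus W$, and that is immediate once one notices that the orbit through $(v,w)$ ignores the $V$-coordinate; everything else rests on the elementary observation that a nontrivial orthogonal representation has positive cohomogeneity together with the already-proved coincidence of the slice cohomogeneity with $\mathrm{cohom}(G \curvearrowright M)$.
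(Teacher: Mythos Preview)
Your proposal is correct and follows essentially the same approach as the paper: both arguments split $\nu_p(G\cdot p) = V \oplus V^\perp$, observe that $V^\perp \neq \{0\}$ by nontriviality of the slice representation, note that the orbit of $y+z$ is $\{y\}\times G_p\cdot z$ so that codimensions add, and invoke the preceding lemma. The only cosmetic difference is that the paper fixes a single regular point of the slice representation and argues with its orbit (using compactness of $G_p\cdot z$ to get positive codimension in $V^\perp$), whereas you phrase the same computation as an additivity statement for cohomogeneities and use the sphere argument instead; the content is identical.
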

	\begin{proof}
		Let $x\in \nu_{p}(G\cdot p)$ be a regular point of the slice representation. Write it as $x=y+z$, where $y\in V$ and $z\in V^{\perp}$. Since the slice representation is orthogonal, it leaves $V^\perp$ invariant. Moreover, $V^\perp$ is a subspace of positive dimension because the slice representation at $p$ in nontrivial by assumption. The codimension of $G_{p}\cdot x$ in $\nu_{p}(G\cdot p)$ is equal to the cohomogeneity of the slice representation, which is also the cohomogeneity of the action. Notice that $G_{p}\cdot x=y+G_{p}\cdot z$,
		which means that $G_{p}\cdot x$ and $G_{p}\cdot z$ have the same dimension. Since $G_p \cdot z$ is compact and contained in $V^\perp$, it must be of positive codimension in $V^\perp$. Altogether, we conclude:
		\begin{align*}
		\mathrm{cohom}(G \curvearrowright M) &= \mathrm{codim}_{\nu_p(G \cdot p)}(G_p \cdot x) \\
		&= \mathrm{codim}_{\nu_p(G \cdot p)}(G_p \cdot z) \\
		&= \dim V + \mathrm{codim}_{V^\perp}(G_p \cdot z) > \dim V,
		\end{align*}
		finishing the proof.
	\end{proof}
	
	\begin{lemma}
		Assume that $\Sigma$ is a section whose set of regular points is not dense. Then there is a nonempty open subset $\Omega \subseteq \Sigma$ such that all of its points have the same orbit type but none of them is regular.
		\label{lemma:OpenSubsetsWithSameType}
	\end{lemma}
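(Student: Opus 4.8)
The plan is to construct $\Omega$ by repeatedly passing to smaller nonempty open subsets of $\Sigma$ on which coarser and coarser invariants of the isotropy groups become locally constant, until the full orbit type is forced to stabilize. Since the regular points of $\Sigma$ are not dense, fix once and for all a nonempty open set $W\subseteq\Sigma$ containing no regular point of $M$; then every open $\Omega\subseteq W$ automatically consists of non-regular points, so the only thing left to arrange is constancy of the orbit type on some open subset of $W$.

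First I would note that $q\mapsto\dim G_q$ is upper semicontinuous: by Corollary \ref{corollary}, every point $q_0$ has a neighborhood in which each isotropy group is conjugate to a subgroup of $G_{q_0}$, hence has dimension at most $\dim G_{q_0}$. As this function is integer-valued, it is constant, say $\equiv k_0$, on the nonempty open set $W_1=\{q\in W:\dim G_q=k_0\}$, where $k_0$ denotes its minimum over $W$. The payoff of restricting to $W_1$ is the following local rigidity statement, which I consider the crux of the argument: near any $q_0\in W_1$ the isotropy groups that occur are, after conjugation into $G_{q_0}$, trapped between $(G_{q_0})^{0}$ and $G_{q_0}$. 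Concretely, take a slice $S_{q_0}^{\varepsilon}$ as in (1) and write a point of $U^{\varepsilon}(G\cdot q_0)=G\cdot S_{q_0}^{\varepsilon}$ as $g\cdot q'$ with $q'\in S_{q_0}^{\varepsilon}$; Lemma \ref{isotropies} gives $G_{q'}\subseteq G_{q_0}$, and if $g\cdot q'$ lies in $W_1$ then $\dim G_{q'}=k_0=\dim(G_{q_0})^{0}$, so $(G_{q'})^{0}$ — a closed connected subgroup of $(G_{q_0})^{0}$ of full dimension — must equal $(G_{q_0})^{0}$. Since the action is proper, $G_{q_0}$ is compact, its component group $\pi_0(G_{q_0})=G_{q_0}/(G_{q_0})^{0}$ is finite, and $G_{q'}$ is pinned down by the subgroup $G_{q'}/(G_{q_0})^{0}$ of this finite group.

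Granting this, the remaining two shrinkings are pure bookkeeping. Since $G_{q'}/(G_{q_0})^{0}$ is a subgroup of $G_{q_0}/(G_{q_0})^{0}$, one reads off $|\pi_0(G_q)|=|G_{q'}/(G_{q_0})^{0}|\le|\pi_0(G_{q_0})|$ for $q$ near $q_0$ in $W_1$, so $q\mapsto|\pi_0(G_q)|$ is again upper semicontinuous on $W_1$ and therefore constant, say $\equiv n_0$, on some nonempty open $W_2\subseteq W_1$. On $W_2$ the inequality becomes an equality: near $q_0\in W_2$ the subgroup $G_{q'}/(G_{q_0})^{0}$ of the finite group $G_{q_0}/(G_{q_0})^{0}$ has order $n_0=|\pi_0(G_{q_0})|$ and is hence the whole group, so $G_{q'}=G_{q_0}$ and $G_q=gG_{q_0}g^{-1}$. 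Thus the orbit type is locally constant on $W_2$, and one may take $\Omega=W_2\cap U^{\varepsilon}(G\cdot q_0)$ for any chosen $q_0\in W_2$ (or a connected component of $W_2$, if a connected $\Omega$ is preferred).

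The one step requiring actual care is the middle one — upgrading ``$\dim G_q$ constant'' to ``the relevant isotropy groups lie between $(G_{q_0})^{0}$ and $G_{q_0}$'' — where one works inside a slice via Lemma \ref{isotropies} and uses that the identity component of a closed subgroup is closed in order to get $(G_{q'})^{0}=(G_{q_0})^{0}$. After that, everything is upper semicontinuity of integer-valued functions together with the observation that a subgroup of a finite group whose order equals that of the group is the whole group.
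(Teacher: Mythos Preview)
Your argument is correct and follows essentially the same strategy as the paper's: first minimize $\dim G_q$ over an open set of non-regular points, then minimize $|\pi_0(G_q)|$, and use Lemma~\ref{isotropies}/Corollary~\ref{corollary} to see that near any such double-minimizer the isotropy groups are conjugated into $G_{q_0}$, have the same identity component, and hence coincide with $G_{q_0}$. The only cosmetic difference is packaging: you phrase this as successive shrinkings $W\supseteq W_1\supseteq W_2$ via upper semicontinuity of integer-valued invariants, whereas the paper picks a single point $p$ realizing both minima and takes $\Omega = U\cap\Xi$ in one step.
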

	\begin{proof}
		By our assumption, there must be a nonempty open subset $\Xi \subseteq \Sigma$ containing no regular points. 
		Let $\Xi' \subseteq \Xi$ stand for the subset of points whose isotropy subgroups have the smallest possible dimension among all points of $\Xi$. Pick $p \in \Xi'$ such that $G_p$ has the smallest possible number of connected components among all points of $\Xi'$ (this is possible because all the isotropy subgroups of $G$ are compact).
		By Corollary \ref{corollary}, there is a neighborhood $U$ of $p$ in $M$ such that the orbit type of every $q \in U$ is greater than or equal to that of $p$. By construction, $\Omega = U \cap \Xi \subseteq \Sigma$ consists of nonregular points of the same orbit type.
	\end{proof}
	\vspace{-0.1em}
	Now we are ready to prove the main result.
	\vspace{-0.1em}
	\begin{proof}[Proof of the main theorem]
		By Proposition \ref{II}, it suffices to prove that $M_\mathrm{reg} \cap \Sigma$ is dense in $\Sigma$. Suppose this is not the case. Then by Lemma~\ref{lemma:OpenSubsetsWithSameType}, there exists a nonempty open subset $\Omega\subseteq \Sigma$ all of whose points have the same nonprincipal orbit type. Pick any $p\in \Omega$, take $\varepsilon > 0$ small enough as before, and denote $S=S_{p}^{\varepsilon}$. Shrinking $\Omega$ if necessary, we may assume that it is an embedded submanifold of $M$ and lies in $U^\varepsilon(G \cdot p)$.
		
		Let $F \colon G \times S \twoheadrightarrow U^{\varepsilon}(G\cdot p)$ be the smooth map defined by $F(g,x)=g\cdot x$. Since the orbits in $U^\varepsilon(G \cdot p)$ intersect the slices $S_q^\varepsilon \; (q \in G \cdot p)$ transversely, $F$ is a smooth submersion. Given $(g,s), (g',s') \in G \times S$, one has $F(g, s) = F(g', s')$ if and only if $s = g^{-1} g' \cdot s'$. Since the latter lies both in $S$ and $g^{-1} g' \cdot S$, we deduce that $g^{-1} g' \in G_p$. It follows that the fibers of $F$ coincide with the orbits of the action
		\begin{equation*}
			G_{p}\curvearrowright G\times S, \quad h\cdot (g,s)=(gh^{-1},h\cdot s),
		\end{equation*}
		so $F$ induces a diffeomorphism\footnote{Note that $G\times_{G_{p}}S$ is the fiber bundle over $G/G_p \cong G \cdot p$ with a model fiber $S$ associated to the principal $G_p$-bundle $G \twoheadrightarrow G/G_p \cong G \cdot p$.} between $G\times_{G_{p}}S=(G\times S)/G_{p}$ and $U^{\varepsilon}(G\cdot p)$. We now consider the restriction $\widetilde{F}\colon F^{-1}(\Omega)\twoheadrightarrow \Omega$, which is still a submersion. We claim that the smooth map $\beta\colon F^{-1}(\Omega)\to S$ given by $\beta(g,s)=s$ is constant along the fibers of $\widetilde{F}$. Indeed, consider elements $(g,s),(g',s')\in F^{-1}(\Omega)$ such that $g\cdot s=g'\cdot s'$. Since $g\cdot s\in \Omega$, the points $s$ and $p$ have the same orbit type, and since $G_{s}\subseteq G_{p}$, we conclude that $G_{s}=G_{p}$. Furthermore, we know that there exists some $h\in G_{p}=G_{s}$ such that $(g',s')=(gh^{-1},h\cdot s)=(gh^{-1},s)$, so $s=s'$ as claimed. For this reason, $\beta$ factors through $\widetilde{F}$ to a smooth map $f\colon \Omega \to S$ such that $f(\widetilde{F}(g,s))=s$.
		
		We assert that $f$ is an injective immersion. It suffices to prove that it is an immersion at $p$, since shrinking $\Omega$ once again would let us assume it is an injective immersion globally. In order to see this, we will show that $f_{*p}$ is just the inclusion $T_p\Omega = T_p\Sigma \hookrightarrow \nu_p(G \cdot p) = T_pS$. Indeed, let $\pi \colon G \to G \cdot p$ be the orbit map at $p$, and let $\mathfrak{g}$ denote the Lie algebra of $G$. Consider the following commutative diagram:
		$$
	    \xymatrix{
				T_{(e,p)}(F^{-1}(\Omega)) \ar@{->>}[d]_{\widetilde{F}_{*(e,p)}} \ar[dr]^(0.6){\beta_{*(e,p)}} \\
				T_{p}\Omega \ar[r]_{f_{*p}} & T_{p}S
	    }
		$$
		Observe that $F_{*(e,p)} \colon \mathfrak{g} \oplus T_pS \to T_pM = T_p(G \cdot p) \oplus T_pS$ decomposes as $\pi_{*e} \colon \mathfrak{g} \to T_p(G \cdot p)$ and $\mathrm{Id} \colon T_pS \to T_pS$. Given $X \in T_p\Omega$, there is some $Y \in T_{(e,p)}(F^{-1}(\Omega))$ such that $X = \widetilde{F}_{*(e,p)}(Y)$. This $Y$ must be of the form $X' + X$, where $X' \in \mathfrak{g}$. But since $\beta$ is just the projection onto the second factor of $G \times S$, we have $f_{*p}(X) = \beta_{*(e,p)}(X' + X) = X$.
		
		To conclude, consider the submanifold $\widetilde{\Omega}=f(\Omega)\subseteq S$. A point of $\widetilde{\Omega}$ is of the form $f(g\cdot s)=s$, where $(g,s)\in G\times S$ is such that $g\cdot s\in \Omega$. As we saw earlier, $G_{s}=G_{p}$, so $G_{p}\cdot s=\{s\}$. This means that $\widetilde{\Omega}$ is pointwise fixed by $G_{p}$, hence so is $T_{p}\widetilde{\Omega} \subseteq \nu_{p}(G\cdot p)$. Since
		$$
		\dim T_{p}\widetilde{\Omega} = \dim \widetilde{\Omega} = \dim \Omega = \dim \Sigma = \mathrm{cohom}(G \curvearrowright M),
		$$
		we arrive at a contradiction by Lemma \ref{invariants}.
	\end{proof}
	
	\bibliographystyle{amsplain}

\begin{thebibliography}{99}
		
		\bibitem{AB} \textsc{Alexandrino, M. M., Bettiol, R.G.}: \emph{Lie groups and geometric aspects of group actions}. Springer, Cham (2015)
		
		\bibitem{Be} \textsc{Berndt, J., Console, S., Olmos, C. E.}: \emph{Submanifolds and holonomy}, 2nd edn. CRC Press, Boca Raton (2016)
		
		
		
		\bibitem{KG} \textsc{Grove, K., Ziller, W.}: Polar manifolds and actions. J. Fixed Point Theory Appl. (2012). \url{https://doi.org/10.1007/s11784-012-0087-y}
		
		\bibitem{Mi} \textsc{Michor, P. W.}: \emph{Topics in Differential Geometry}. Amer. Math. Soc., Providence (2008)
		
		\bibitem{PT} \textsc{Palais, R.S., Terng, C.-L.}: A general theory of canonical forms. Trans. Amer. Math. Soc. (1987). \url{https://doi.org/10.1090/S0002-9947-1987-0876478-4}
		
		\bibitem{PTbook} \textsc{Palais, R.S., Terng, C.-L.}: \textit{Critical point theory and submanifold geometry}. Lecture Notes in Mathematics, 1353, Springer-Verlag, Berlin (1988)
		
		\bibitem{Sz} \textsc{Szenthe, J.}: Orthogonally transversal submanifolds and the generalizations of the Weyl group. Period. Math. Hungar. (1984). \url{https://doi.org/10.1007/BF02454161}
	\end{thebibliography}
	
\end{document}